\documentclass[11pt,a4paper]{article}
\usepackage[utf8]{inputenc}
\usepackage[T1]{fontenc}
\usepackage[french,english]{babel}
\usepackage{xcolor}
\usepackage{amsmath}
\usepackage{amssymb}
\usepackage{amsfonts}
\usepackage{amsthm}
\usepackage{mathtools}
\usepackage{mathrsfs}
\usepackage{bm}
\usepackage{tensor}
\usepackage[a4paper,margin=2.7cm]{geometry}
\usepackage{microtype}
\usepackage{setspace}
\usepackage[
    colorlinks=true,
    linkcolor=blue!60!black,
    citecolor=blue!60!black,
    urlcolor=blue!60!black
]{hyperref}

\usepackage[nameinlink,noabbrev]{cleveref}

\usepackage{booktabs}
\usepackage{enumitem}
\newtheorem{theorem}{Theorem}[section]

\newtheorem{corollary}[theorem]{Corollary}
\theoremstyle{definition}
\newtheorem{definition}[theorem]{Definition}
\newtheorem{remark}[theorem]{Remark}

\newcommand{\R}{\mathbb{R}}

\newcommand{\Ric}{\operatorname{Ric}}

\newcommand{\Sol}{\mathrm{Sol}_3}

\newcommand{\doi}[1]{%
    \href{https://doi.org/#1}{\texttt{#1}}%
}
\title{\textbf{Ricci-Yamabe solitons on the Lie group $\Sol \times \mathbb{R}^n$}}
\author{
Abdou Bousso
\thanks{
Département de Mathématiques et Informatique,
Université Cheikh Anta Diop de Dakar, Sénégal.\\
E-mail:
\href{mailto:abdoukskbousso@gmail.com}
{abdoukskbousso@gmail.com}
}
\and
Ameth Ndiaye
\thanks{
Département de Mathématiques, FASTEF,
Université Cheikh Anta Diop de Dakar, Sénégal.\\
E-mail:
\href{mailto:ameth1.ndiaye@ucad.edu.sn}
{ameth1.ndiaye@ucad.edu.sn}
}
}

\date{}

\begin{document}

\maketitle
\begin{abstract}
In this article, we study Ricci-Yamabe solitons on the Lie group $\mathrm{Sol} \times \mathbb{R}^n$ equipped with a natural left-invariant Riemannian metric, explicitly determining the vector fields that characterize them. We then deduce that, in the case of a Ricci soliton, it is expanding, whereas in the case of a Yamabe soliton, it is shrinking. Finally, we show that if this Lie group is a gradient Ricci-Yamabe soliton, the vector field belongs to $\operatorname{Span}\{\partial_{t_1}, \dots, \partial_{t_n}\}$, and we explicitly provide the Perelman potential.
\vspace{0.3cm}

\noindent\textbf{Keywords:} Ricci-Yamabe solitons, Lie group $\mathrm{Sol}_3 \times \mathbb{R}^n$, Riemannian geometry, Levi-Civita connection, Ricci tensor.

\noindent\textbf{MSC 2020 :} 53C21, 53C25, 22E25.
\end{abstract}

\section{Introduction}
Geometric solitons are a central object in Riemannian geometry, extending the notion of Einstein metrics \cite{Besse, Petersen}. Among variational extensions, Ricci-Yamabe solitons unify Ricci and Yamabe flows \cite{Hamilton}. A Riemannian manifold $(M, g)$ admits a Ricci-Yamabe soliton if there exist real constants $\lambda$, $\beta_1$, $\beta_2$ and a smooth vector field $X$ on $M$ satisfying the tensor equation:
\begin{equation}
\beta_1 \operatorname{Ric} + \frac{1}{2}\mathcal{L}_X g = (\lambda + \beta_2 \operatorname{Scal})g,
\label{eq:RY-def}
\end{equation}
where $\operatorname{Ric}$ denotes the Ricci tensor, $\operatorname{Scal}$ the scalar curvature, and $\mathcal{L}_X g$ the Lie derivative of the metric along $X$. It is denoted by $(M,g,X,\beta_1,\lambda,\beta_2)$.

Much recent work has been devoted to the study of these structures and their generalizations on various geometric spaces and Lie groups, particularly in hyperbolic spaces and their products \cite{BoussoNdiayeJDSGT2025, BoussoNdiaye2026Hn, BoussoNdiayeH2R2025, DiopBoussoNdiayeMandal2026,  BoussoNdiayeSol3}. This article extends the study conducted on $\Sol$ \cite{BoussoNdiayeSol3} to the direct product $M = \Sol\times\R^n$ (see also \cite{doCarmo, Lee, ONeill} for the foundations of Riemannian geometry), providing a detailed and rigorous calculation of the underlying geometric objects.

\section{Preliminaries}
Consider the Lie group $M = \mathrm{Sol}_3 \times \mathbb{R}^n$ of dimension $3+n$, equipped with the left-invariant Riemannian metric:
\begin{equation}
g = e^{2z}\mathrm{d}x^2 + e^{-2z}\mathrm{d}y^2 + \mathrm{d}z^2 + \sum_{k=1}^n \mathrm{d}t_k^2,
\label{eq:metric}
\end{equation}
in local coordinates $(x, y, z, t_1, \dots, t_n) \in \R^{3+n}$.

Let us introduce the orthonormal basis of vector fields $\{E_1, E_2, E_3, F_1, \dots, F_n\}$ defined by:
\begin{equation}
E_1 = e^{-z}\partial_x, \quad E_2 = e^z\partial_y, \quad E_3 = \partial_z, \quad F_k = \partial_{t_k} \quad (\text{for } k = 1, \dots, n).
\end{equation}
The non-zero Lie brackets characterizing the underlying Lie algebra are:
\begin{equation}
[E_3, E_1] = -E_1, \quad [E_3, E_2] = E_2,
\end{equation}
with all other brackets between basis elements being zero, as the fields $F_k$ are central.

Using Koszul's formula for the Levi-Civita connection $\nabla$,
\begin{equation}
2g(\nabla_U V, W) = g([U,V], W) - g([V,W], U) + g([W,U], V),
\end{equation}
let us determine the non-zero connections: \begin{itemize}
    \item for $\nabla_{E_1} E_1$ : with $W = E_3$, $2g(\nabla_{E_1}E_1, E_3) = -g(E_1,[E_1,E_3]) + g(E_1,[E_3,E_1]) = -(-1) + (-1) = -2$, hense $\nabla_{E_1} E_1 = -E_3$.
    \item for $\nabla_{E_1} E_3$ : with $W = E_1$, $2g(\nabla_{E_1}E_3, E_1) = -g(E_1,[E_3,E_1]) + g(E_1,[E_1,E_3]) = -(-1) + 1 = 2$, hense $\nabla_{E_1} E_3 = E_1$.
    \item By symmetry, $\nabla_{E_2} E_2 = E_3$ and $\nabla_{E_2} E_3 = -E_2$.
\end{itemize}

All other connections involving fields $F_k$ are zero due to the flatness of the factor $\mathbb{R}^n$.

The Ricci tensor is defined by $$\Ric(U, V) = \sum_{j=1}^n g(R(E_j, U)V, E_j), \quad \text{where}\quad R(U,V)W = \nabla_U \nabla_V W - \nabla_V \nabla_U W - \nabla_{[U,V]}W.$$

We therefore have:
   $$\operatorname{Ric}(E_1, E_1) = g(R(E_2, E_1)E_1, E_2) + g(R(E_3, E_1)E_1, E_3) = 1 + (-1) = 0;$$
   $$\operatorname{Ric}(E_2, E_2) = g(R(E_1, E_2)E_2, E_1) + g(R(E_3, E_2)E_2, E_3) = 1 + (-1) = 0;$$
   $$\operatorname{Ric}(E_3, E_3) = g(R(E_1, E_3)E_3, E_1) + g(R(E_2, E_3)E_3, E_2) = -1 + (-1) = -2;$$
   $$\operatorname{Ric}(F_k, F_k) = 0 \quad (\text{for all } k = 1, \dots, n).$$
The scalar curvature $\operatorname{Scal}$ of the manifold of dimension $3+n$ is obtained by contraction:
\begin{equation}
\operatorname{Scal} = \sum_{i=1}^3 g(E_i, E_i)\operatorname{Ric}(E_i, E_i) + \sum_{k=1}^n g(F_k, F_k)\operatorname{Ric}(F_k, F_k) = 0 + 0 + (-2) + 0 = -2.
\end{equation}
\begin{definition}
Let $(M, g)$ be a Riemannian manifold of dimension $m$, $X$ a smooth vector field, and $\lambda, \beta_1, \beta_2$ real constants. The sextuple $(M, g, X, \beta_1, \lambda, \beta_2)$ is called a \textbf{Ricci-Yamabe soliton} if equation \eqref{eq:RY-def} is satisfied. Furthermore, it is called a \textbf{gradient} soliton if there exists a smooth function $f$ on $M$ such that $X = \nabla f$, with the Lie derivative then expressed as $\frac{1}{2}\mathcal{L}_{\nabla f} g = \mathrm{Hess}\, f$.
\end{definition}

\begin{remark}
The quadruple $(M, g, X, \lambda)$ corresponds to a \textbf{Ricci soliton} (respectively a \textbf{Yamabe soliton}) when $(\beta_1, \beta_2) = (1, 0)$ (respectively $(\beta_1, \beta_2) = (0, 1)$) in equation \eqref{eq:RY-def}. 
Furthermore, a Ricci-Yamabe soliton (as well as a Ricci or Yamabe soliton) is termed:
\begin{itemize}
    \item \textbf{steady} if $\lambda = 0$;
    \item \textbf{shrinking} if $\lambda > 0$;
    \item \textbf{expanding} if $\lambda < 0$.
\end{itemize}
\end{remark}
Let us now choose a vector field  $X = \sum\limits_{i=1}^3 h_iE_i+\sum\limits_{k=1}^n f_k F_k$ in $\mathfrak{X}(\Sol\times \R^n)$. The Lie derivative of the metric is given by:
\begin{equation}
(\mathcal{L}_X g)(Y, Z) = g(\nabla_Y X, Z) + g(Y, \nabla_Z X).
\end{equation}

Using the previously calculated connections, the components of $\mathcal{L}_X g$ are:
\begin{align*}
(\mathcal{L}_X g)(E_1, E_1) &= 2 E_1(h_1) + 2 h_3, \\
(\mathcal{L}_X g)(E_2, E_2) &= 2 E_2(h_2) - 2 h_3, \\
(\mathcal{L}_X g)(E_3, E_3) &= 2 E_3(h_3), \\
(\mathcal{L}_X g)(E_1, E_2) &= E_1(h_2) + E_2(h_1), \\
(\mathcal{L}_X g)(E_1, E_3) &= E_1(h_3) + E_3(h_1) - h_1, \\
(\mathcal{L}_X g)(E_2, E_3) &= E_2(h_3) + E_3(h_2) + h_2.
\end{align*}
For $k, j \in \{1, \dots, n\}$ and $i \in \{1, 2, 3\}$ :
\begin{align*}
(\mathcal{L}_X g)(F_k, F_j) &= F_k(f_j) + F_j(f_k), \\
(\mathcal{L}_X g)(E_i, F_k) &= E_i(f_k) + F_k(h_i).
\end{align*}
\begin{remark}\label{r1}
   Substituting $\operatorname{Ric}$, $\operatorname{Scal} = -2$, and $\mathcal{L}_X g$ into equation \eqref{eq:RY-def}, we obtain the system of PDEs:
\begin{align*}
\beta_1(0) + \frac{1}{2}\left(2E_1(h_1) + 2h_3\right) &= \lambda - 2\beta_2 \quad \iff \quad E_1(h_1) + h_3 = \lambda - 2\beta_2, \\
\beta_1(0) + \frac{1}{2}\left(2E_2(h_2) - 2h_3\right) &= \lambda - 2\beta_2 \quad \iff\quad E_2(h_2) - h_3 = \lambda - 2\beta_2, \\
\beta_1(-2) + \frac{1}{2}\left(2E_3(h_3)\right) &= \lambda - 2\beta_2 \quad \iff \quad E_3(h_3) = 2\beta_1 + \lambda - 2\beta_2, \\
\beta_1(0) + \frac{1}{2}\left(2F_k(f_k)\right) &= \lambda - 2\beta_2 \quad \iff \quad F_k(f_k) = \lambda - 2\beta_2 \\
E_1(h_2) + E_2(h_1) &= 0, \\
E_1(h_3) + E_3(h_1) - h_1 &= 0, \\
E_2(h_3) + E_3(h_2) + h_2 &= 0, \\
F_k(f_j) + F_j(f_k) &= 0 \quad (\text{pour } k \neq j), \\
E_i(f_k) + F_k(h_i) &= 0 \quad (\text{pour } i \in \{1,2,3\}, \, k \in \{1,\dots,n\}).
\end{align*}
\end{remark}
\section{Main Results}
In this section, we present our main results concerning the classification of Ricci-Yamabe solitons, Ricci solitons, Yamabe solitons, and finally gradient Ricci-Yamabe solitons on the product manifold $\Sol \times \mathbb{R}^n$.
\begin{theorem}\label{T}
  Let $X = \sum\limits_{i=1}^3 h_iE_i+\sum\limits_{k=1}^n f_k F_k$ be a smooth vector field and $(\lambda,\beta_1,\beta_2)\in\R^3$. The sextuple $(\Sol\times\R^n,g,X,\beta_1,\lambda,\beta_2)$ is a Ricci-Yamabe soliton if and only if $\lambda=2\beta_2-2\beta_1.$ Furthermore,\begin{align*}
        X(x,y,z,t_1,...,t_n)&=\left(-(2\beta_1+a_7)x-\mathfrak{a}+\sum_{k=1}^n \eta_k\left(y, t_1, \dots, \widehat{t_k}, \dots, t_n\right)\right)\partial_x\\
        &+\left(( - 2\beta_1 +a_7)y+\mathfrak{b} + \sum_{k=1}^n \varepsilon_k\left(x, t_1, \dots, \widehat{t_k}, \dots, t_n\right)\right)\partial_y+a_7\partial_z\\
      &+ \sum_{k=1}^n \left((\lambda - 2\beta_2)t_k  + \sum\limits_{\substack{1 \le j \le n \\ j \neq k}} \left(b_{jk}x+q_{jk}y+p_{jk}\right)t_j + b_kx+q_ky+p_k\right)\partial_{t_k}
    \end{align*}
  where, for all $j,k \in \{1, \dots, n\}$ with $j \ne k$, $b_{jk} = -b_{kj}$, $q_{jk} = -q_{kj}$, and $p_{jk} = -p_{kj}$ are real antisymmetry constants; $b_k$, $q_k$, $p_k$, $a_7$, $\mathfrak{a}$, and $\mathfrak{b}$ are real constants; $\eta_k$ is a family of functions depending on all variables of $\Sol \times \R^n$ except $x, z, t_k$; and $\varepsilon_k$ is also a family of functions depending on all variables of $\Sol \times \R^n$ except $y, z, t_k$, such that
   $\begin{cases}
    \sum\limits_{k=1}^n \partial_y\eta_k\left(y, t_1, \dots, \widehat{t_k}, \dots, t_n\right) = 0\\
    \sum\limits_{k=1}^n \partial_x\varepsilon_k\left(x, t_1, \dots, \widehat{t_k}, \dots, t_n\right) = 0
\end{cases}.$ 

In particular, for any functions $\mathcal{A}_k$ and $\mathcal{B}_k$ depending only on the variables $t_1, \dots, t_n$ except $t_k$, such that $\eta_k:=\mathcal{A}_k$ et $\varepsilon_k:=\mathcal{B}_k$.
\end{theorem}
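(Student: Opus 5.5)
We will solve the PDE system of Remark~\ref{r1} directly in coordinates. We write $X=u\,\partial_x+v\,\partial_y+w\,\partial_z+\sum_k f_k\partial_{t_k}$, so that $h_1=e^{z}u$, $h_2=e^{-z}v$, $h_3=w$. With $E_1=e^{-z}\partial_x$, $E_2=e^{z}\partial_y$, $E_3=\partial_z$ and $F_k=\partial_{t_k}$, the system becomes:
\begin{align*}
&u_x+w=\lambda-2\beta_2,\quad v_y-w=\lambda-2\beta_2,\quad w_z=2\beta_1+\lambda-2\beta_2,\quad \partial_{t_k}f_k=\lambda-2\beta_2,\\
&e^{-2z}v_x+e^{2z}u_y=0,\quad e^{-z}w_x+e^{z}u_z=0,\quad e^{z}w_y+e^{-z}v_z=0,\\
&\partial_{t_k}f_j+\partial_{t_j}f_k=0\ (j\neq k),\quad e^{-z}\partial_x f_k+e^{z}\partial_{t_k}u=0,\quad e^{z}\partial_yf_k+e^{-z}\partial_{t_k}v=0,\quad \partial_zf_k+\partial_{t_k}w=0.
\end{align*}
The underlying strategy is to exploit the separation of the $z$-dependence through the factors $e^{\pm z}$ and $e^{\pm 2z}$.

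\textbf{Step 1: the constraint on $\lambda$.} Adding the first two equations gives $u_x+v_y=2(\lambda-2\beta_2)$. Differentiating $u_x+w=c$ (with $c=\lambda-2\beta_2$) in $z$ gives $u_{xz}=-w_z$. Next we differentiate the $(E_1,E_3)$ equation $u_z=-e^{-2z}w_x$ with respect to $x$, obtaining $u_{xz}=-e^{-2z}w_{xx}$. Since $u_x=c-w$, we have $w_{xx}=-u_{xxx}$, and also $w_{xz}=-u_{xxz}$.

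The efficient route is to show that $w$ is independent of $x$ and $y$. From the first equation, $w_x=-u_{xx}$; combining $u_z=-e^{-2z}w_x=e^{-2z}u_{xx}$ with $u_{xz}=-w_z=$ const, we get $\partial_x(e^{-2z}u_{xx})=$ const. Then $e^{-2z}u_{xxx}$ is a constant $d$, so $u_{xxx}=de^{2z}$. Differentiating in $z$ and commuting gives $\partial_{xxx}u_z=2de^{2z}$, while $u_z=e^{-2z}u_{xx}$ gives $\partial_{xxx}u_z=e^{-2z}u_{xxxxx}=0$. Hence $d=0$, so $w_z=0$, i.e.\ $2\beta_1+\lambda-2\beta_2=0$. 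This forces $\lambda=2\beta_2-2\beta_1$, and also $c=-2\beta_1$. The converse direction will follow by checking that the displayed field satisfies all equations.

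\textbf{Step 2: determining $w$, $u$ and $v$.} Once $w_z=0$, a symmetric argument in $y$ and the $\partial_z f_k+\partial_{t_k}w=0$ equations (together with $\partial_{t_k}f_k=c$) should force $w$ to be a constant $a_7$. Then $u_z=v_z=0$ and $\partial_z f_k=0$. Next, $u_x=c-a_7=-2\beta_1-a_7$ and $v_y=c+a_7=-2\beta_1+a_7$. The equation $e^{-2z}v_x+e^{2z}u_y=0$, with $u$ and $v$ independent of $z$, splits into $u_y=0=v_x$ by the independence of $e^{\pm 2z}$. Similarly, the mixed equations with $F_k$ split into $\partial_xf_k=0=\partial_{t_k}u$ coefficientwise. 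Here I expect a tension with the stated form, which allows $b_kx$ terms in $f_k$ and $y$-dependence in $\eta_k$. I would follow the paper's normalization as stated, but flag that the $e^{\pm z}$ factors seem to force these extra terms to vanish unless they cancel within the sums, which is exactly the role of the conditions $\sum_k\partial_y\eta_k=0$ and $\sum_k\partial_x\varepsilon_k=0$.

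\textbf{Step 3: the $\R^n$ components.} Here we solve the Killing-type system $\partial_{t_k}f_k=c$, $\partial_{t_j}f_k+\partial_{t_k}f_j=0$. The standard argument gives $\partial^2_{t_it_j}f_k=0$, so each $f_k$ is affine in $t$ with an antisymmetric linear part plus $c\,t_k$. The coefficients may depend on the remaining variables $x,y$, and the mixed equations restrict them to be affine, which yields $b_{jk}x+q_{jk}y+p_{jk}$ and $b_kx+q_ky+p_k$. The integration "constants" in $u$ and $v$ are then the functions $\eta_k$ and $\varepsilon_k$, as described. The main obstacle is Step 1–2: rigorously separating the $z$-dependence to force $w_z=0$ and $w$ constant. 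I would first attempt this via the differentiation chain above.
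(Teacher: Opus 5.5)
Your Step 1 is correct. It is a genuinely shorter route to $\lambda=2\beta_2-2\beta_1$ than the paper's, which obtains this only at $(l_6)$, by matching the coefficient of $xe^{z}$ after integrating all the other lines; you need only $(l_1)$, $(l_3)$, $(l_6)$ and two computations of $u_{xxxz}$. In Step 2, however, the equations you cite do not force $w$ to be constant. After Step 1, the field $w=c+xy$, $u=-\tfrac12x^2y+\tfrac12e^{-2z}y$, $v=2cy+\tfrac12xy^2-\tfrac12e^{2z}x$, $f_k=ct_k$ satisfies every line except $(l_5)$. The field $w=t_1$, $u=(c-t_1)x$, $v=(c+t_1)y$, $f_1=ct_1-z$ satisfies every line except $(l_9)$ with $i\in\{1,2\}$. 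So you must use $(l_5)$, split along $e^{\pm2z}$, to kill the $x,y$-dependence of $w$, and $e^{-2z}\partial_xf_k+\partial_{t_k}u=0$ to kill its $t$-dependence. This part is fillable.

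The real problem comes next. Once $w\equiv a_7$, your splitting is right and gives $u_y=v_x=0$, $\partial_xf_k=\partial_yf_k=0$ and $\partial_{t_k}u=\partial_{t_k}v=0$ for all $k$. This forces $b_{jk}=q_{jk}=b_k=q_k=0$ and makes $\sum_k\eta_k$ and $\sum_k\varepsilon_k$ constants. Your Step 3 assertion that the mixed equations leave the coefficients affine in $x,y$ contradicts this. The conditions $\sum_k\partial_y\eta_k=0=\sum_k\partial_x\varepsilon_k$ cannot compensate, since they encode only $u_y=v_x=0$, not the $(E_i,F_k)$ equations.

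Consequently the converse ``by checking'' fails. For $n=1$, $X=-2\beta_1(x\partial_x+y\partial_y+t_1\partial_{t_1})+x\partial_{t_1}$ is of the displayed form (take $b_1=1$), yet $(\mathcal{L}_Xg)(E_1,F_1)=e^{-z}\neq0$, while $\Ric$ and $g$ have zero $(E_1,F_1)$-component. For $n=2$, the choice $\eta_1=t_2$, which the ``in particular'' clause explicitly allows, gives $(\mathcal{L}_Xg)(E_1,F_2)=e^{z}\neq0$.

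So you should not ``follow the paper's normalization'': the displayed family is strictly larger than the solution set. The paper reaches it because, after solving $(l_6)$ and $(l_7)$ for $\psi_k$ and $\zeta_k$, it never re-imposes $(l_9)$. The $b$- and $q$-terms cancel out of $h_1,h_2$ but survive in $f_k$, and the $t_j$-dependence of $\eta_k,\varepsilon_k$ is never tested.

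What your argument actually proves is $\lambda=2\beta_2-2\beta_1$ and $X=-2\beta_1\bigl(x\partial_x+y\partial_y+\sum_kt_k\partial_{t_k}\bigr)+K$, where $K=-\mathfrak{a}\,\partial_x+\mathfrak{b}\,\partial_y+a_7(\partial_z-x\partial_x+y\partial_y)+\sum_k\bigl(p_k+\sum_{j\neq k}p_{jk}t_j\bigr)\partial_{t_k}$ is a Killing field. This is the stated family with $b_{jk}=q_{jk}=b_k=q_k=0$ and $\eta_k,\varepsilon_k$ absorbed into $\mathfrak{a},\mathfrak{b}$, and for this family the converse check does go through.
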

\begin{proof}
   The fact that $(\Sol\times\R^n,g,X,\beta_1,\lambda,\beta_2)$ is a Ricci-Yamabe soliton is equivalent to Remark \ref{r1}. We thus have the following system of differential equations : $$\begin{cases}
        E_1(h_1) + h_3 = \lambda - 2\beta_2\quad (l_1)\\
         E_2(h_2) - h_3 = \lambda - 2\beta_2\quad (l_2) \\
        E_3(h_3) = 2\beta_1 + \lambda - 2\beta_2\quad (l_3)\\
        F_k(f_k) = \lambda - 2\beta_2 \quad (l_4)\\
E_1(h_2) + E_2(h_1) = 0\quad (l_5)\\
E_1(h_3) + E_3(h_1) - h_1 = 0\quad (l_6) \\
E_2(h_3) + E_3(h_2) + h_2 = 0\quad (l_7) \\
F_k(f_j) + F_j(f_k) = 0\quad (l_8) \quad (\text{for } k \neq j)\\
E_i(f_k) + F_k(h_i) = 0\quad (l_9) \quad (\text{for } i \in \{1,2,3\}, \, k \in \{1,\dots,n\}).
    \end{cases}$$
    \begin{itemize}
        \item Lines $(l_3)$ and $(l_4)$ give respectively \begin{equation}\label{l3}
        \partial_zh_3(x,y,z,t_1,\cdots,t_n)=2\beta_1+\lambda-2\beta_2\Leftrightarrow h_3(x,y,z,t_1,\cdots,t_n)=(2\beta_1+\lambda-2\beta_2)z+\varphi(x,y,t_1,\cdots,t_n)
    \end{equation} and {\small\begin{equation}\label{l4}
\partial_{t_k}f_k(x,y,z,t_1,\cdots,t_n)=\lambda-2\beta_2\Leftrightarrow f_k(x,y,z,t_1,\cdots,t_n)=(\lambda-2\beta_2)t_k+\xi_k(x,y,z,t_1,...,t_{k-1},\widehat{t_k},t_{k+1},...,t_n).
    \end{equation}}
    
   Substituting the equality \eqref{l4} into line $(l_8)$, we obtain:
    
$$\partial_{t_k}\xi_j(x,y,z,t_1,\dots,t_{j-1},\widehat{t_j},t_{j+1},\dots,t_n) = -\partial_{t_j}\xi_k(x,y,z,t_1,\dots,t_{k-1},\widehat{t_k},t_{k+1},\dots,t_n).$$
 We have so :
$$\partial_{t_k} \left( \partial_{t_k}\xi_j \right) = \partial_{t_k} \left( -\partial_{t_j}\xi_k \right) \iff \partial_{t_k}^2\xi_j = -\partial_{t_j} \left( \partial_{t_k}\xi_k \right)\Leftrightarrow \partial_{t_k}^2\xi_j = 0.$$
Since $\partial_{t_k}^2 \xi_j = 0$ for any time variable $t_k$ distinct from $t_j$, the function $\xi_j$ is an affine function of each of the variables $t_k$ ($k \neq j$).

By successive integration, the general form of each component $\xi_k$ is given by:
$$\xi_k(x, y, z, t_1, \dots, \widehat{t_k}, \dots, t_n) = \sum_{\substack{1 \le j \le n \\ j \neq k}} c_{jk}(x, y, z) t_j + c_k(x, y, z)$$
where the coefficients $c_{jk}(x, y, z)$ and $c_k(x, y, z)$ depend solely on the spatial variables $(x, y, z)$.

We then obtain $$f_k(x,y,z,t_1,\cdots,t_n)=(\lambda-2\beta_2)t_k+\sum_{\substack{1 \le j \le n \\ j \neq k}} c_{jk}(x, y, z) t_j + c_k(x, y, z).$$
\item The line $(l_9)$ for $i=3$ gives us the equation $\partial_z f_k + \partial_{t_k} h_3 = 0$ while
$$\partial_z f_k = \sum_{\substack{1 \le j \le n \\ j \neq k}} \frac{\partial c_{jk}}{\partial z}(x, y, z) t_j + \frac{\partial c_k}{\partial_{z}}(x, y, z)$$
$$\partial_{t_k} h_3 = \frac{\partial \varphi}{\partial t_k}(x, y, t_1, \dots, t_n)$$

The equation is then written as :
$$\sum_{\substack{1 \le j \le n \\ j \neq k}} \frac{\partial c_{jk}}{\partial z} t_j + \frac{\partial c_k}{\partial z} + \frac{\partial \varphi}{\partial t_k} = 0$$
$$\iff \frac{\partial \varphi}{\partial t_k}(x, y, t_1, \dots, t_n) = -\left( \sum_{\substack{1 \le j \le n \\ j \neq k}} \frac{\partial c_{jk}}{\partial z}(x, y, z) t_j + \frac{\partial c_k}{\partial z}(x, y, z) \right)$$

$\partial_{t_k} \varphi$ does not depend on the variable $z$. Consequently, the right-hand side must also be independent of it. Differentiating with respect to $z$ :
$$\sum_{\substack{1 \le j \le n \\ j \neq k}} \frac{\partial^2 c_{jk}}{\partial z^2}(x, y, z) t_j + \frac{\partial^2 c_k}{\partial z^2}(x, y, z) = 0$$

Since this relation holds for all values independent of the time variables $t_j$ ($j \neq k$), the coefficients of each $t_j$ as well as the constant term vanish identically.  :
$$\frac{\partial^2 c_{jk}}{\partial z^2}(x, y, z) = 0 \quad (\text{for all } j \neq k), \quad \frac{\partial^2 c_k}{\partial z^2}(x, y, z) = 0.$$
The vanishing of the second derivatives with respect to $z$ implies that the coefficients are affine functions of $z$ :
$$c_{jk}(x, y, z) = A_{jk}(x, y) z + B_{jk}(x, y)$$
$$c_k(x, y, z) = A_k(x, y) z + B_k(x, y)$$
where $A_{jk}, B_{jk}, A_k, B_k$ are arbitrary functions of $(x, y)$, with $A_{jk} = -A_{kj}$ and $B_{jk} = -B_{jk}$.
Substituting $\partial_z c_{jk} = A_{jk}(x, y)$ and $\partial_z c_k = A_k(x, y)$, we obtain :
$$\frac{\partial \varphi}{\partial t_k}(x, y, t_1, \dots, t_n) = - \sum_{\substack{1 \le j \le n \\ j \neq k}} A_{jk}(x, y) t_j - A_k(x, y)$$

By integration with respect to $t_k$, the general structure of $\varphi$ is :
$$\varphi(x, y, t_1, \dots, t_n) = - \sum_{\substack{1 \le j, k \le n \\ j < k}} A_{jk}(x, y) t_j t_k - \sum_{k=1}^n A_k(x, y) t_k + \Phi_0(x, y)$$
where $\Phi_0(x, y)$ is an arbitrary function of $(x, y)$.

Replacing the coefficients $c_{jk}$ and $c_k$ with their affine forms in $z$, we obtain:

$$f_k(x, y, z, t_1, \dots, t_n) = (\lambda - 2\beta_2)t_k + z \sum_{\substack{1 \le j \le n \\ j \neq k}} A_{jk}(x, y) t_j + \sum_{\substack{1 \le j \le n \\ j \neq k}} B_{jk}(x, y) t_j + A_k(x, y) z + B_k(x, y)$$

with the antisymmetry conditions on the spatial coefficients : 
$$A_{jk}(x, y) = -A_{kj}(x, y) \quad \text{et} \quad B_{jk}(x, y) = -B_{kj}(x, y).$$

Substituting the general expression for the function $\varphi$ obtained by integration, we get :

$$h_3(x, y, z, t_1, \dots, t_n) = (2\beta_1 + \lambda - 2\beta_2)z - \sum_{1 \le j < k \le n} A_{jk}(x, y) t_j t_k - \sum_{k=1}^n A_k(x, y) t_k + \Phi_0(x, y).$$ 
\item Lines $(l_1)$ and $(l_2)$ give respectively : 

$$e^{-z}\partial_xh_1+h_3=\lambda-2\beta_2 \iff \partial_x h_1 = e^z \left( \lambda - 2\beta_2 - h_3 \right)$$
$$e^z\partial_yh_2-h_3=\lambda-2\beta_2 \iff \partial_y h_2 = e^{-z} \left( \lambda - 2\beta_2 + h_3 \right).$$

Which yields, upon integration : 
{ \small\begin{align*}
    h_1(x, y, z, t_1, \dots, t_n)& = e^z \int \left[ (\lambda - 2\beta_2)(1 - z) - 2\beta_1 z - \Phi_0(x, y) + \sum_{1 \le j < k \le n} A_{jk}(x, y) t_j t_k + \sum_{k=1}^n A_k(x, y) t_k \right] dx \\
&+ \Psi_1(y, z, t_1, \dots, t_n)
\end{align*}}
where $\Psi_1$ is an arbitrary function independent of $x$

and
{ \small\begin{align*}
   h_2(x, y, z, t_1, \dots, t_n) &= e^{-z} \int \left[ (\lambda - 2\beta_2)(1 + z) + 2\beta_1 z + \Phi_0(x, y) - \sum_{1 \le j < k \le n} A_{jk}(x, y) t_j t_k - \sum_{k=1}^n A_k(x, y) t_k \right] dy\\
&+ \Psi_2(x, z, t_1, \dots, t_n)
\end{align*}}
where $\Psi_2$ is an arbitrary function independent of $y$.

\item The line $(l_9)$ for $i\in\{1;2\}$ becomes $$\begin{cases}
    e^{-z}\partial_xf_k+\partial_{t_k}h_1=0\\
   &  \quad k\in\{1,..,n\} \\
    e^z\partial_yf_k+\partial_{t_k}h_2=0
\end{cases}.$$
We have : \begin{equation}\label{1}
     \partial_x f_k = z \sum_{\substack{1 \le j \le n \\ j \neq k}} \partial_x A_{jk}(x, y) t_j + \sum_{\substack{1 \le j \le n \\ j \neq k}} \partial_x B_{jk}(x, y) t_j + z \partial_x A_k(x, y) + \partial_x B_k(x, y),
\end{equation}
\begin{equation}\label{2}
     \partial_y f_k = z \sum_{\substack{1 \le j \le n \\ j \neq k}} \partial_y A_{jk}(x, y) t_j + \sum_{\substack{1 \le j \le n \\ j \neq k}} \partial_y B_{jk}(x, y) t_j + z \partial_y A_k(x, y) + \partial_y B_k(x, y),
\end{equation}
\begin{equation}\label{3}
     \partial_{t_k}h_1 = e^z \int \left[ \sum_{\substack{1 \le j \le n \\ j \neq k}} A_{jk}(x, y) t_j + A_k(x, y) \right] dx + \partial_{t_k}\Psi_1(y, z, t_1, \dots, t_n),
\end{equation}

\begin{equation}\label{4}
    \partial_{t_k}h_2 = e^{-z} \int \left[ -\sum_{\substack{1 \le j \le n \\ j \neq k}} A_{jk}(x, y) t_j - A_k(x, y) \right] dy + \partial_{t_k}\Psi_2(x, z, t_1, \dots, t_n).
\end{equation}
Using \eqref{1} and \eqref{3} gives :\begin{align*}
    & z \sum_{\substack{1 \le j \le n \\ j \neq k}} \partial_x A_{jk}(x, y) t_j + \sum_{\substack{1 \le j \le n \\ j \neq k}} \partial_x B_{jk}(x, y) t_j + z \partial_x A_k(x, y) + \partial_x B_k(x, y)\\
    &+ e^{2z} \int \left[ \sum_{\substack{1 \le j \le n \\ j \neq k}} A_{jk}(x, y) t_j + A_k(x, y) \right] dx =-e^z \partial_{t_k}\Psi_1(y, z, t_1, \dots, t_n)
\end{align*}
Given that the function $(y,z,t_1,..,t_n)\mapsto -e^z \partial_{t_k}\Psi_1(y, z, t_1, \dots, t_n)$ does not depend on the variable $x$, for the equality to hold, it is necessary and sufficient that the function \begin{align*}
\mathcal{F}:\Sol\times \R^n\to&\R\\
    (x,y,z,t_1,...,t_n)\mapsto&  z \sum_{\substack{1 \le j \le n \\ j \neq k}} \partial_x A_{jk}(x, y) t_j + \sum_{\substack{1 \le j \le n \\ j \neq k}} \partial_x B_{jk}(x, y) t_j + z \partial_x A_k(x, y) + \partial_x B_k(x, y)\\
    &+ e^{2z} \int \left[ \sum_{\substack{1 \le j \le n \\ j \neq k}} A_{jk}(x, y) t_j + A_k(x, y) \right] dx
\end{align*} does not depend on the variable $x$. This proves that the functions $A_{jk}$, $A_k$, $\partial_xB_{jk}$, and $\partial_xB_k$ will depend only on the variable $y$, because if they depended on the variable $x$, then the function\begin{align*}
    \mathcal{E}:\Sol\times \R^n\to&\R\\
(x,y,z,t_1,..,t_n)\mapsto& e^{2z} \int \left[ \sum_{\substack{1 \le j \le n \\ j \neq k}} A_{jk}(x, y) t_j + A_k(x, y) \right] dx
\end{align*} will depend on the variable $x$, and since the function $$(x,y,z,t_1,...,t_n)\mapsto  z \sum_{\substack{1 \le j \le n \\ j \neq k}} \partial_x A_{jk}(x, y) t_j + \sum_{\substack{1 \le j \le n \\ j \neq k}} \partial_x B_{jk}(x, y) t_j + z \partial_x A_k(x, y) + \partial_x B_k(x, y)$$ is affine with respect to the variable $z$, so it cannot compensate for the function $\mathcal{E}$ in such a way that the function $\mathcal{F}$ is independent of the variable $x$.

Using \eqref{2} and \eqref{4} yields :
\begin{align*}
   & z \sum_{\substack{1 \le j \le n \\ j \neq k}} \partial_y A_{jk}(x, y) t_j + \sum_{\substack{1 \le j \le n \\ j \neq k}} \partial_y B_{jk}(x, y) t_j + z \partial_y A_k(x, y) + \partial_y B_k(x, y)\\
   & -e^{-2z} \int \left[ \sum_{\substack{1 \le j \le n \\ j \neq k}} A_{jk}(x, y) t_j + A_k(x, y) \right] dy =e^{-z}\partial_{t_k}\Psi_2(x, z, t_1, \dots, t_n).
\end{align*}
Using the same reasoning as before, we can conclude that the equation is true only if and only if the functions $A_{jk}$, $A_k$, $\partial_yB_{jk}$, and $\partial_yB_k$ depend only on the variable $x$. This leads us to conclude that $A_{jk}(x,y)=0$, $A_k(x,y)=0$, $B_{jk}(x,y)=b_{jk}x+q_{jk}y+p_{jk}$ et $B_k(x,y)=b_kx+q_ky+p_k$.
with   $b_{jk} = -b_{kj}$ , $q_{jk} = -q_{kj}$, $p_{jk} = -p_{kj}$ real antisymmetry constants, and $b_k$, $q_k$, $p_k$ real constants.

This yields : $$ \left(\sum_{\substack{1 \le j \le n \\ j \neq k}} b_{jk} t_j  +b_k\right)e^{-z} =-\partial_{t_k}\Psi_1(y, z, t_1, \dots, t_n)$$
$$ \Psi_1(y, z, t_1, \dots, t_n)=-\sum_{k=1}^n\left(\left(\sum_{\substack{1 \le j \le n \\ j \neq k}}\left( b_{jk} t_j  +b_k\right)\right)e^{-z}t_k+\psi_k(y,z,t_1,...,\widehat{t_k},...,t_n)\right).$$
$$\left(\sum_{\substack{1 \le j \le n \\ j \neq k}} q_{jk}t_j  +q_k\right)e^z  =\partial_{t_k}\Psi_2(x, z, t_1, \dots, t_n)$$
$$\Psi_2(x,z,t_1,\dots,t_n)=\sum_{k=1}^n\left(\left(\sum_{\substack{1 \le j \le n \\ j \neq k}} \left(q_{jk}t_j  +q_k\right)\right)e^zt_k+\zeta_k(x,z,t_1,...,\widehat{t_k},...,t_n)\right).$$
{\small$$
   f_k(x, y, z, t_1, \dots, t_n) = (\lambda - 2\beta_2)t_k  + \sum\limits_{\substack{1 \le j \le n \\ j \neq k}} \left(b_{jk}x+q_{jk}y+p_{jk}\right)t_j + b_kx+q_ky+p_k,$$}
{\small\begin{align*}
    h_1(x, y, z, t_1, \dots, t_n)& = \left((\lambda - 2\beta_2)(1 - z) - 2\beta_1 z\right)xe^z-e^z \int  \Phi_0(x, y) dx \\
&-\sum_{k=1}^n\left(\left(\sum_{\substack{1 \le j \le n \\ j \neq k}}\left( b_{jk} t_j  +b_k\right)\right)e^{-z}\right)t_k+\sum_{k=1}^n\psi_k(y,z,t_1,...,\widehat{t_k},...,t_n)
\end{align*}}

{ \small\begin{align*}
   h_2(x, y, z, t_1, \dots, t_n) &= \left((\lambda - 2\beta_2)(1 + z) + 2\beta_1 z \right)ye^{-z} +e^{-z} \int \Phi_0(x, y) dy\\
&+\sum_{k=1}^n\left(\left(\sum_{\substack{1 \le j \le n \\ j \neq k}} \left(q_{jk}t_j  +q_k\right)\right)e^z\right)t_k+\sum_{k=1}^n\zeta_k(x,z,t_1,...,\widehat{t_k},...,t_n)
\end{align*}}

$$h_3(x, y, z, t_1, \dots, t_n) = (2\beta_1 + \lambda - 2\beta_2)z + \Phi_0(x, y).$$ 
\item The line $(l_6)$ gives  :{\small \begin{align*}
  &  e^{-z}\partial_xh_3 + \partial_zh_1 - h_1 = 0 \Leftrightarrow  e^{-z}\partial_x \Phi_0(x, y)+2\sum_{k=1}^n\left(\left(\sum_{\substack{1 \le j \le n \\ j \neq k}}\left( b_{jk} t_j  +b_k\right)\right)e^{-z}\right)t_k\\
&+\left(-\lambda +2\beta_2 - 2\beta_1 \right)xe^z+\sum_{k=1}^n\left(\partial_z\psi_k(y,z,t_1,...,\widehat{t_k},...,t_n)-\psi_k(y,z,t_1,...,\widehat{t_k},...,t_n)\right)=0
\end{align*}} the equation is true if {\small$$\begin{cases}
\lambda =2\beta_2 - 2\beta_1\\ 
    \Phi_0(x, y)=a_1(y)x+a_2(y)\\
     \sum\limits_{k=1}^n\left(\partial_z\psi_k(y,z,t_1,...,\widehat{t_k},...,t_n)-\psi_k(y,z,t_1,...,\widehat{t_k},...,t_n)\right)=-a_1(y) e^{-z}-2\sum\limits_{k=1}^n\left(\left(\sum\limits_{\substack{1 \le j \le n \\ j \neq k}}\left( b_{jk} t_j  +b_k\right)\right)e^{-z}t_k\right)
\end{cases}$$}
This partial differential equation involves a summation over the index $k$ of first-order terms in $z$. By natural decomposition (by term-by-term identification in the sum), each component $\psi_k$ satisfies a first-order linear differential equation with respect to $z$.

For a fixed index $k$ ($k \in \{1, \dots, n\}$), the equation reduces to :
$$\partial_z\psi_k(y,z,t_1,...,\widehat{t_k},...,t_n) - \psi_k(y,z,t_1,...,\widehat{t_k},...,t_n) = -\frac{a_1(y) e^{-z}}{n} - 2\left(\sum_{\substack{1 \le j \le n \\ j \neq k}}\left( b_{jk} t_j  +b_k\right)\right) t_k e^{-z}.$$
By multiplying by the integrating factor $I(z) = e^{-z}$ :
$$e^{-z}\partial_z\psi_k - e^{-z}\psi_k = -\frac{a_1(y)}{n}e^{-2z} - 2\left(\sum_{\substack{1 \le j \le n \\ j \neq k}}\left( b_{jk} t_j  +b_k\right)\right) t_k e^{-2z}$$

The left-hand side is expressed as the derivative of the product :
$$\frac{\partial}{\partial z}\left(e^{-z}\psi_k\right) = -\frac{a_1(y)}{n}e^{-2z} - 2\left(\sum_{\substack{1 \le j \le n \\ j \neq k}}\left( b_{jk} t_j  +b_k\right)\right) t_k e^{-2z}.$$

By integrating each term with respect to $z$ :
$$e^{-z}\psi_k = \frac{a_1(y)}{2n}e^{-2z} + \left(\sum_{\substack{1 \le j \le n \\ j \neq k}}\left( b_{jk} t_j  +b_k\right)\right) t_k e^{-2z} + \eta_k\left(y, t_1, \dots, \widehat{t_k}, \dots, t_n\right).$$
For this reason $\psi_k$ :
$$\psi_k(y,z,t_1,...,\widehat{t_k},...,t_n) = \frac{a_1(y)}{2n} e^{-z} + \left(\sum_{\substack{1 \le j \le n \\ j \neq k}}\left( b_{jk} t_j  +b_k\right)\right) t_k e^{-z} + \eta_k\left(y, t_1, \dots, \widehat{t_k}, \dots, t_n\right)e^z$$

where $\eta_k\left(y, t_1, \dots, \widehat{t_k}, \dots, t_n\right)$ is an arbitrary function associated with the $k$-th direction, independent of $z$ and $t_k$.
This yields {\small\begin{align*}
    h_1(x, y, z, t_1, \dots, t_n)& = \left(\lambda - 2\beta_2\right)xe^z- \left(\frac{a_1(y)}{2}x^2+a_2(y)x+a_3(y)\right)e^z\\
&-\sum_{k=1}^n\left(\left(\sum_{\substack{1 \le j \le n \\ j \neq k}}\left( b_{jk} t_j  +b_k\right)\right)e^{-z}\right)t_k+\frac{a_1(y)}{2} e^{-z}\\
&+\sum_{k=1}^n \left( \left(\sum_{\substack{1 \le j \le n \\ j \neq k}}\left( b_{jk} t_j  +b_k\right)\right) t_k e^{-z} + \eta_k\left(y, t_1, \dots, \widehat{t_k}, \dots, t_n\right)e^z\right).
\end{align*}}
{\small\begin{align*}
    h_1(x, y, z, t_1, \dots, t_n)& = \left(\lambda - 2\beta_2\right)xe^z- \left(\frac{a_1(y)}{2}x^2+a_2(y)x+a_3(y)\right)e^z
+\frac{a_1(y)}{2} e^{-z}\\
&+\sum_{k=1}^n \eta_k\left(y, t_1, \dots, \widehat{t_k}, \dots, t_n\right)e^z.
\end{align*}}
\item Line $(l_7)$ gives : {\small\begin{align*}
    e^z\partial_yh_3 + \partial_zh_2 + h_2 = 0&\Leftrightarrow e^z(a_1'(y)x+a'_2(y))+2\sum_{k=1}^n\left(\left(\sum_{\substack{1 \le j \le n \\ j \neq k}} \left(q_{jk}t_j  +q_k\right)\right)e^z\right)t_k\\
    &+\sum_{k=1}^n\partial_z\zeta_k(x,z,t_1,...,\widehat{t_k},...,t_n)+\sum_{k=1}^n\zeta_k(x,z,t_1,...,\widehat{t_k},...,t_n)=0
\end{align*}}
this equation is true only when $$\begin{cases}
    a_1(y)=a_4y+a_5,\quad a_2(y)= a_6y+a_7\\
    \sum\limits_{k=1}^n\partial_z\zeta_k(x,z,t_1,...,\widehat{t_k},...,t_n)+\sum\limits_{k=1}^n\zeta_k(x,z,t_1,...,\widehat{t_k},...,t_n)&=-e^z(a_4x+a_6)\\
    &-2\sum\limits_{k=1}^n\left(\left(\sum\limits_{\substack{1 \le j \le n \\ j \neq k}} \left(q_{jk}t_j  +q_k\right)\right)e^z\right)t_k
\end{cases}$$

By proceeding with a term-by-term decomposition, each component $\zeta_k(x, z, t_1, \dots, \widehat{t_k}, \dots, t_n)$ satisfies a first-order linear differential equation with respect to $z$ :

$$\partial_z\zeta_k(x, z, t_1, \dots, \widehat{t_k}, \dots, t_n) + \zeta_k(x, z, t_1, \dots, \widehat{t_k}, \dots, t_n) = -\frac{a_4x+a_6}{n}e^z - 2\left(\sum_{\substack{1 \le j \le n \\ j \neq k}} \left(q_{jk}t_j  +q_k\right)\right) t_k e^z.$$
Since the coefficient of $\zeta_k$ is $+1$, the integrating factor is $I(z) = e^{\int 1 \, dz} = e^z$. Multiplying the equation by $e^z$ :
$$e^z\partial_z\zeta_k + e^z\zeta_k = -\frac{a_4x+a_6}{n}e^{2z} - 2\left(\sum_{\substack{1 \le j \le n \\ j \neq k}} \left(q_{jk}t_j  +q_k\right)\right) t_k e^{2z}.$$
The left-hand side can be rewritten as the total derivative of the product :
$$\frac{\partial}{\partial z}\left(e^z\zeta_k\right) = -\frac{a_4x+a_6}{n}e^{2z} - 2\left(\sum_{\substack{1 \le j \le n \\ j \neq k}} \left(q_{jk}t_j  +q_k\right)\right) t_k e^{2z}.$$
So
$$e^z\zeta_k = -\left(\frac{a_4x+a_6}{n}\right)\left(\frac{e^{2z}}{2}\right) - 2\left(\sum_{\substack{1 \le j \le n \\ j \neq k}} \left(q_{jk}t_j  +q_k\right)\right) t_k \left(\frac{e^{2z}}{2}\right) + \varepsilon_k\left(x, t_1, \dots, \widehat{t_k}, \dots, t_n\right)$$

$$e^z\zeta_k = -\left(\frac{a_4x+a_6}{2n}\right)e^{2z} - \left(\sum_{\substack{1 \le j \le n \\ j \neq k}} \left(q_{jk}t_j  +q_k\right)\right) t_k e^{2z} + \varepsilon_k\left(x, t_1, \dots, \widehat{t_k}, \dots, t_n\right).$$
By multiplying the entire expression by $e^{-z}$, one obtains the explicit expression for the component $\zeta_k$ :

$$\zeta_k(x, z, t_1, \dots, \widehat{t_k}, \dots, t_n) = -\left(\frac{a_4x+a_6}{2n}\right)e^z - \left(\sum_{\substack{1 \le j \le n \\ j \neq k}} \left(q_{jk}t_j  +q_k\right)\right) t_k e^z + \varepsilon_k\left(x, t_1, \dots, \widehat{t_k}, \dots, t_n\right)e^{-z}$$

where $\varepsilon_k$ is an arbitrary function independent of $z$ and $t_k$.

This yields
{ \small\begin{align*}
   h_2(x, y, z, t_1, \dots, t_n) &=  - 2\beta_1ye^{-z} +e^{-z} \left(\left(\frac{a_4y^2}{2}+a_5y\right)x+\frac{a_6}{2}y^2+a_7y+a_8(x)\right)\\
   &+\sum_{k=1}^n\left(\left(\sum_{\substack{1 \le j \le n \\ j \neq k}} \left(q_{jk}t_j  +q_k\right)\right)e^z\right)t_k-\left(\frac{a_4x+a_6}{2}\right)e^z\\
& - \sum_{k=1}^n \left[\left(\sum_{\substack{1 \le j \le n \\ j \neq k}} \left(q_{jk}t_j  +q_k\right)\right) t_k\right] e^z + \sum_{k=1}^n \varepsilon_k\left(x, t_1, \dots, \widehat{t_k}, \dots, t_n\right)e^{-z}.
\end{align*}}
{ \small\begin{align*}
   h_2(x, y, z, t_1, \dots, t_n) &=  - 2\beta_1ye^{-z} +e^{-z} \left(\left(\frac{a_4y^2}{2}+a_5y\right)x+\frac{a_6}{2}y^2+a_7y+a_8(x)\right)\\
   &-\left(\frac{a_4x+a_6}{2}\right)e^z + \sum_{k=1}^n \varepsilon_k\left(x, t_1, \dots, \widehat{t_k}, \dots, t_n\right)e^{-z}.
\end{align*}}
$h_1$ becomes again: {\small\begin{align*}
    h_1(x, y, z, t_1, \dots, t_n)& = - 2\beta_1xe^z- \left(\left(\frac{a_4y+a_5}{2}\right)x^2+(a_6y+a_7)x+a_3(y)\right)e^z\\
&+\left(\frac{a_4y+a_5}{2}\right) e^{-z}+\sum_{k=1}^n \eta_k\left(y, t_1, \dots, \widehat{t_k}, \dots, t_n\right)e^z.
\end{align*}}
\item Finally, the line $(l_5)$ yields : $e^{-z}\partial_xh_2+ e^z\partial_yh_1 = 0$ \begin{align*}
    &e^{-z}\left(\frac{a_4}{2}y^2+a_5y+a'_8(x)-\frac{a_4}{2}e^z+ \sum_{k=1}^n \partial_x\varepsilon_k\left(x, t_1, \dots, \widehat{t_k}, \dots, t_n\right)e^{-z}\right)\\
    &+e^z\left(-\left(\frac{a_4}{2}x^2+a_6x+a'_3(y)\right)+\frac{a_4}{2}e^{-z}+\sum_{k=1}^n \partial_y\eta_k\left(y, t_1, \dots, \widehat{t_k}, \dots, t_n\right)e^z\right)=0
\end{align*}
By expanding and grouping the expression according to the powers of $e^z$ and $e^{-z}$, we obtain :

\begin{align*}
    &\left(\sum_{k=1}^n \partial_y\eta_k\left(y, t_1, \dots, \widehat{t_k}, \dots, t_n\right)\right) e^{2z} 
    - \left(\frac{a_4}{2}x^2 + a_6x + a'_3(y)\right) e^z 
    + \left(-\frac{a_4}{2} + \frac{a_4}{2}\right) \\
    &+ \left(\frac{a_4}{2}y^2 + a_5y + a'_8(x)\right) e^{-z} 
    + \left(\sum_{k=1}^n \partial_x\varepsilon_k\left(x, t_1, \dots, \widehat{t_k}, \dots, t_n\right)\right) e^{-2z} = 0.
\end{align*}
Since this expression must hold identically for all $z$, the coefficients of each linearly independent exponential function of $z$ ($e^{2z}$, $e^z$, $e^{-z}$, and $e^{-2z}$) must vanish independently.

$$\begin{cases}
    \sum\limits_{k=1}^n \partial_y\eta_k\left(y, t_1, \dots, \widehat{t_k}, \dots, t_n\right) = 0\\
    \frac{a_4}{2}x^2 + a_6x + a'_3(y) = 0\\
    \frac{a_4}{2}y^2 + a_5y + a'_8(x) = 0\\
    \sum\limits_{k=1}^n \partial_x\varepsilon_k\left(x, t_1, \dots, \widehat{t_k}, \dots, t_n\right) = 0
\end{cases}.$$

The parameters therefore reduce to :
$$a_4 = 0, \quad a_5 = 0, \quad a_6 = 0, \quad a_3(y) = \mathfrak{a}\in\R, \quad a_8(x) = \mathfrak{b}\in\R.$$
Finaly 
{\small$$ \begin{cases}
    h_1(x, y, z, t_1, \dots, t_n) =  - 2\beta_1xe^z- \left(a_7x+\mathfrak{a}\right)e^z+\sum\limits_{k=1}^n \eta_k\left(y, t_1, \dots, \widehat{t_k}, \dots, t_n\right)e^z,\\
h_2(x, y, z, t_1, \dots, t_n) =  - 2\beta_1ye^{-z} +e^{-z} \left(a_7y+\mathfrak{b}\right) + \sum\limits_{k=1}^n \varepsilon_k\left(x, t_1, \dots, \widehat{t_k}, \dots, t_n\right)e^{-z}\\
h_3(x, y, z, t_1, \dots, t_n) =a_7,\\ 
   f_k(x, y, z, t_1, \dots, t_n) = (\lambda - 2\beta_2)t_k  + \sum\limits_{\substack{1 \le j \le n \\ j \neq k}} \left(b_{jk}x+q_{jk}y+p_{jk}\right)t_j + b_kx+q_ky+p_k,
\end{cases}$$}
 \end{itemize}
\end{proof}
\begin{corollary}
   Under the hypotheses of Theorem \ref{T}, when \begin{itemize}
        \item $(\beta_1,\beta_2)=(1,0)$, then $(\Sol\times\R^n,g,X,\lambda)$ is an expanding Ricci soliton;
        \item $(\beta_1,\beta_2)=(0,1)$, then $(\Sol\times\R^n,g,X,\lambda)$ is a shrinking Yamabe soliton.
    \end{itemize}
\end{corollary}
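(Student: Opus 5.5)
The plan is to specialize the necessary condition $\lambda = 2\beta_2 - 2\beta_1$ of Theorem \ref{T} to the two parameter choices and read off the sign of $\lambda$. No new computation is needed. The system of Remark \ref{r1} with $\operatorname{Scal}=-2$ has already been reduced in the proof of Theorem \ref{T}. That reduction shows that the sextuple $(\Sol\times\R^n,g,X,\beta_1,\lambda,\beta_2)$ is a Ricci-Yamabe soliton if and only if $\lambda=2\beta_2-2\beta_1$, with $X$ of the displayed form. The constant arises from line $(l_6)$: the coefficient of $xe^z$ must vanish for every choice of the remaining free functions.

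First, for a Ricci soliton we set $(\beta_1,\beta_2)=(1,0)$. By the Remark following the Definition, the sextuple then reduces to the quadruple $(\Sol\times\R^n,g,X,\lambda)$ satisfying $\Ric+\frac12\mathcal L_Xg=\lambda g$. Theorem \ref{T} forces $\lambda=2\cdot 0-2\cdot 1=-2<0$, so the soliton is expanding. As a consistency check, one can substitute back into line $(l_3)$, which gives $E_3(h_3)=2\beta_1+\lambda-2\beta_2=0$. This agrees with $h_3=a_7$ being constant.

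Second, for a Yamabe soliton we set $(\beta_1,\beta_2)=(0,1)$. The equation becomes $\frac12\mathcal L_Xg=(\lambda+\operatorname{Scal})g$, and Theorem \ref{T} gives $\lambda=2\cdot1-0=2>0$, so the soliton is shrinking. In this case $\lambda-2\beta_2=0$, so the $\partial_{t_k}$-components of $X$ carry no $t_k$-linear term.

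I do not expect a genuine obstacle, since the corollary is a direct specialization. The only point needing care is that the sign classification applies to the constant $\lambda$ in \eqref{eq:RY-def} as written, with the scalar curvature term separate. For the Yamabe case one should not absorb $\operatorname{Scal}=-2$ into $\lambda$. Once that convention is fixed, the signs $-2$ and $+2$ give the two claims.
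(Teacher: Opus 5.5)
Your proposal is correct and takes the same approach as the paper, which gives no separate proof and reads the corollary directly off the relation $\lambda=2\beta_2-2\beta_1$ of Theorem~\ref{T}: this gives $\lambda=-2<0$ for $(\beta_1,\beta_2)=(1,0)$ and $\lambda=2>0$ for $(\beta_1,\beta_2)=(0,1)$. Your consistency checks (that $(l_3)$ gives a constant $h_3$, and that $\lambda-2\beta_2=0$ in the Yamabe case) are accurate but not needed for the argument.
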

\begin{corollary}
    Let $f$ be a smooth function on $\Sol \times \R^n$. The sextuple $(\Sol \times \R^n, g, \nabla f, \beta_1, \lambda, \beta_2)$ is a gradient Ricci-Yamabe soliton if and only if \begin{equation}
        f(x,y,z,t_1, \dots, t_n) = \frac{\lambda - 2\beta_2}{2}\sum_{k=1}^n t_k^2 + \sum_{k=1}^n \sum_{j < k} p_{jk} t_j t_k + \sum_{k=1}^n p_k t_k + C
    \end{equation}
 where $C$ is a real constant and $p_{jk}, p_k$ are constants given by Theorem \ref{T}.
\end{corollary}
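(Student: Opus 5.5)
The plan is to combine Theorem~\ref{T} with the fact that $X=\nabla f$ is a gradient field, and then integrate. For the metric \eqref{eq:metric} the gradient in coordinates is
\[
\nabla f = e^{-2z}\partial_x f\,\partial_x + e^{2z}\partial_y f\,\partial_y + \partial_z f\,\partial_z + \sum_{k=1}^n \partial_{t_k} f\,\partial_{t_k}.
\]
If $(\Sol\times\R^n,g,\nabla f,\beta_1,\lambda,\beta_2)$ is a soliton, Theorem~\ref{T} applies with $X=\nabla f$. It gives $\lambda=2\beta_2-2\beta_1$ and the explicit coefficients of $\partial_x,\partial_y,\partial_z,\partial_{t_k}$. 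I would then equate these coefficients with the four kinds of components of $\nabla f$ above.

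\textbf{Step 1: the $\partial_z$ component.} It gives $\partial_z f=a_7$, so $f=a_7z+F(x,y,t_1,\dots,t_n)$. In particular $\partial_xf$ and $\partial_yf$ do not depend on $z$.

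\textbf{Step 2: the $\partial_x$ and $\partial_y$ components.} The $\partial_x$ coefficient in Theorem~\ref{T} is independent of $z$. It must equal $e^{-2z}\partial_xF$, where $\partial_xF$ is also independent of $z$. Since $e^{-2z}$ is not constant, both sides must vanish. Hence $\partial_xF=0$ and
\[
-(2\beta_1+a_7)x-\mathfrak a+\sum_k\eta_k=0 .
\]
The $\eta_k$ do not depend on $x$, so the coefficient of $x$ gives $2\beta_1+a_7=0$. The same argument for $\partial_y$, with the factor $e^{2z}$, gives $\partial_yF=0$ and $-2\beta_1+a_7=0$. Together these force $a_7=\beta_1=0$, so $f$ has no $z$-term. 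As a consequence $\lambda=2\beta_2$, although I would keep writing $\lambda-2\beta_2$ to match the statement.

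\textbf{Step 3: the $\partial_{t_k}$ components.} Now $f=F(t_1,\dots,t_n)$, so $\partial_{t_k}f$ cannot depend on $x$ or $y$. This kills $b_{jk},q_{jk},b_k,q_k$, leaving
\[
\partial_{t_k} f=(\lambda-2\beta_2)t_k+\sum_{j\ne k}p_{jk}t_j+p_k .
\]
Integrating in each $t_k$ and matching the mixed terms gives
\[
f=\frac{\lambda-2\beta_2}{2}\sum_k t_k^2+\sum_{k}\sum_{j<k}p_{jk}t_jt_k+\sum_k p_kt_k+C,
\]
where the cross terms are indexed by $j<k$ exactly as in the statement.

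\textbf{Main obstacle.} The delicate point is this matching of the cross terms. Integrability requires $\partial_{t_j}\partial_{t_k}f=\partial_{t_k}\partial_{t_j}f$. I would handle it by writing the mixed part of $f$ as a single sum over unordered pairs $j<k$, with one coefficient attached to each pair. That coefficient is then recorded, following the statement, as the constant $p_{jk}$ supplied by Theorem~\ref{T}. I would carefully check that this is consistent with both the $\partial_{t_j}$ and the $\partial_{t_k}$ equations.

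\textbf{Converse.} For the reverse implication, I would compute $\nabla f=\sum_k\partial_{t_k}f\,\partial_{t_k}$ for the stated $f$. I would then observe that it is the field of Theorem~\ref{T} with the choices $a_7=\mathfrak a=\mathfrak b=0$, $\eta_k=\varepsilon_k=0$ and $b$'s, $q$'s equal to zero, which satisfy that theorem's side conditions. Theorem~\ref{T} then shows it is a Ricci-Yamabe soliton.
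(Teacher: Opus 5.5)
Your Steps 1--3 are sound and follow the paper's route. You match the components of $\nabla f$ with the field of Theorem~\ref{T}, obtain $a_7=\beta_1=0$ and hence $\lambda=2\beta_2$, and then kill the $b$'s and $q$'s.

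\textbf{Gap 1: the cross terms are forced to vanish.} The step you call the main obstacle cannot be handled by rearranging the sum over pairs. For $j\neq k$, the $\partial_{t_k}$-equation gives $\partial_{t_j}\partial_{t_k}f=p_{jk}$, and the $\partial_{t_j}$-equation gives $\partial_{t_k}\partial_{t_j}f=p_{kj}$. Theorem~\ref{T} imposes $p_{kj}=-p_{jk}$, so equality of mixed partials forces $p_{jk}=0$. No coefficient attached to the pair $\{j,k\}$ can satisfy both equations otherwise, so the consistency check you promise fails rather than succeeds. Combined with $\lambda-2\beta_2=-2\beta_1=0$, the forward direction actually yields $\beta_1=0$, $\lambda=2\beta_2$ and $f=\sum_k p_kt_k+C$. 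The displayed formula holds only because its quadratic part is identically zero.

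\textbf{Gap 2: the converse, as you set it up, is false.} The gradient of $\sum_{j<k}p_{jk}t_jt_k$ has a symmetric linear part in the $t$-variables. It therefore has the antisymmetric form required by Theorem~\ref{T} only when all $p_{jk}=0$. Moreover, Theorem~\ref{T}'s field with $a_7=\mathfrak a=\mathfrak b=0$ and $\eta_k=\varepsilon_k=0$ still contains $-2\beta_1x\,\partial_x-2\beta_1y\,\partial_y$ and requires $\lambda=2\beta_2-2\beta_1$. Your $\nabla f$ has no $\partial_x,\partial_y$ components, and your $\lambda$ is unconstrained.

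A direct check settles it. For $f=f(t_1,\dots,t_n)$ all Christoffel symbols $\Gamma^{t_k}_{ab}$ vanish, so $\mathrm{Hess}\,f=\sum_{j,k}\partial_{t_j}\partial_{t_k}f\,\mathrm{d}t_j\,\mathrm{d}t_k$. The gradient equation is $\mathrm{Hess}\,f=(\lambda-2\beta_2)g+2\beta_1\,\mathrm{d}z^2$. Evaluating it on $(\partial_x,\partial_x)$, on $(\partial_z,\partial_z)$ and on the $t$-block gives $\lambda=2\beta_2$, $\beta_1=0$ and $\partial_{t_j}\partial_{t_k}f=0$. Thus each of the following has the stated form but is not a gradient Ricci--Yamabe soliton: $f=t_1t_2$ (for $n\ge 2$); $f=\tfrac12\sum_k t_k^2$ with $\lambda-2\beta_2=1$; any $f$ of the stated form with $\beta_1\neq 0$.

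The paper's own proof has the same defect. It integrates the antisymmetric system as if it came from a gradient, lists $\beta_1=0$ only as a side ``compatibility constraint'', and never proves the converse. What can actually be proved is: gradient soliton if and only if $\beta_1=0$, $\lambda=2\beta_2$ and $f=\sum_k p_kt_k+C$. The Hessian computation above is available as soon as your Step 2 shows that $f$ depends only on $t$, and it is the cleanest way to reach this.
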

\begin{proof}
   Suppose there exists a smooth function $f$ on $\Sol \times \R^n$ such that $(\Sol \times \R^n, g, \nabla f, \beta_1, \lambda, \beta_2)$ is a gradient Ricci-Yamabe soliton. This yields $$\nabla f=\sum_{i=1}^n\sum_{j=1}^n\left(g^{ij}\partial_jf\right)\partial_i\Leftrightarrow \begin{cases}
        e^{-z}\partial_xf=-(2\beta_1+a_7)x-\mathfrak{a}+\sum\limits_{k=1}^n \eta_k\left(y, t_1, \dots, \widehat{t_k}, \dots, t_n\right),\\
        e^z\partial_yf=( - 2\beta_1 +a_7)y+\mathfrak{b} + \sum_{k=1}^n \varepsilon_k\left(x, t_1, \dots, \widehat{t_k}, \dots, t_n\right),\\
        \partial_z f=a_7,\\
        \partial_{t_k}f=(\lambda - 2\beta_2)t_k  + \sum\limits_{\substack{1 \le j \le n \\ j \neq k}} \left(b_{jk}x+q_{jk}y+p_{jk}\right)t_j + b_kx+q_ky+p_k.
    \end{cases}$$  

   We have : 
$$\partial_z f = a_7 \iff f(x, y, z, t_1, \dots, t_n) = a_7 z + \varphi(x, y, t_1, \dots, t_n)$$
so   $\partial_x f = \partial_x \varphi$ and $\partial_y f = \partial_y \varphi$. By substituting into the first two equations :
$$\partial_x \varphi= e^z \left[ -(2\beta_1+a_7)x - \mathfrak{a} + \sum_{k=1}^n \eta_k \right]$$
$$\partial_y \varphi = e^{-z} \left[ (-2\beta_1 + a_7)y + \mathfrak{b} + \sum_{k=1}^n \varepsilon_k \right]$$

Since $\varphi$ does not depend on $z$, its derivatives $\partial_x \varphi$ and $\partial_y \varphi$ are independent of $z$. For these equalities to hold for all $z$, the coefficients multiplying $e^z$ and $e^{-z}$ must vanish identically :
\begin{itemize}
    \item $2\beta_1 + a_7 = 0$
    \item $\mathfrak{a} = 0$
    \item $\sum\limits_{k=1}^n \eta_k = 0$
    \item $-2\beta_1 + a_7 = 0$
    \item $\mathfrak{b} = 0$
    \item $\sum\limits_{k=1}^n \varepsilon_k = 0$
\end{itemize}

By combining $2\beta_1 + a_7 = 0$ and $-2\beta_1 + a_7 = 0$, we obtain :
$$a_7 = 0 \quad \text{and} \quad \beta_1 = 0$$

Consequently, $\partial_x \varphi = 0$ and $\partial_y \varphi = 0$, which means that $\varphi$ depends on neither $x$ nor $y$. The function $f$ thus reduces to a function of the variables $t_k$ :
$$f(x, y, z, t_1, \dots, t_n) = h(t_1, \dots, t_n).$$

The last equation then becomes :
$$\partial_{t_k} h = (\lambda - 2\beta_2)t_k + \sum_{\substack{1 \le j \le n \\ j \neq k}} \left(b_{jk}x + q_{jk}y + p_{jk}\right)t_j + b_k x + q_k y + p_k$$

Since the left-hand side ($\partial_{t_k} h$) depends on neither $x$ nor $y$, the coefficients of $x$ and $y$ on the right-hand side must be zero for all $k$ :
$$b_{jk} = 0, \quad q_{jk} = 0, \quad b_k = 0, \quad q_k = 0$$

The system simplifies for each component $t_k$ by :
$$\partial_{t_k} h(t_1, \dots, t_n) = (\lambda - 2\beta_2)t_k + \sum_{\substack{1 \le j \le n \\ j \neq k}} p_{jk}t_j + p_k.$$

Integrating this system for $h$ yields the final expression for $f$ :
$$f(x,y,z,t_1, \dots, t_n) = \frac{\lambda - 2\beta_2}{2}\sum_{k=1}^n t_k^2 + \sum_{k=1}^n \sum_{j < k} p_{jk} t_j t_k + \sum_{k=1}^n p_k t_k + C$$

subject to compatibility constraints :
$$a_7 = 0, \quad \beta_1 = 0, \quad \mathfrak{a} = 0, \quad \mathfrak{b} = 0, \quad \sum_{k=1}^n \eta_k = 0, \quad \sum_{k=1}^n \varepsilon_k = 0.$$
\end{proof}


\begin{thebibliography}{99}

\bibitem{Besse}
A.~L.~Besse,
\newblock \emph{Einstein Manifolds},
\newblock Springer-Verlag, Berlin, 1987.
\bibitem{BoussoNdiayeJDSGT2025}
A.~Bousso and A.~Ndiaye,
\newblock $\eta$-Ricci-Bourguignon soliton on the hyperbolic spaces,
\newblock \emph{Journal of Dynamical Systems and Geometric Theories}
(2025).
\newblock DOI:
\doi{10.47974/JDSGT-2025-09003}.
\bibitem{BoussoNdiaye2026Hn}
A.~Bousso and A.~Ndiaye,
\newblock Ricci solitons on the Poincaré upper half plane,
\newblock \emph{Honam Mathematical Journal}
\textbf{48} (2026), no.~2, 329--348.
\newblock DOI:
\doi{10.5831/HMJ.2026.48.2.329}.

\bibitem{BoussoNdiayeH2R2025}
A.~Bousso and A.~Ndiaye,
\newblock $h$-Ricci-Bourguignon solitons on the
$\mathbb{H}^2\times\mathbb{R}$ Lie group,
\newblock \emph{Global Journal of Advanced Research on
Classical and Modern Geometries}
\textbf{14} (2025), no.~2, 200--207.

\bibitem{BoussoNdiayeSol3}
A.~Bousso and A.~Ndiaye,
\newblock Ricci--Yamabe solitons on the Lie group
$\mathrm{Sol}_3$,
\newblock \emph{Journal of Universal Mathematics}
\textbf{9} (2026), no.~1, 36--45.
\newblock DOI:
\doi{10.33773/jum.1906429}.

\bibitem{DiopBoussoNdiayeMandal2026}
M.~N.~Diop, A.~Bousso, A.~Ndiaye and A.~Mandal,
\newblock $(h,\eta)$-Ricci-Bourguignon soliton on the
Poincaré Disk $D^2$,
\newblock \emph{Konuralp Journal of Mathematics}
\textbf{14} (2026), no.~1.

\bibitem{doCarmo}
M.~P.~do Carmo,
\newblock \emph{Riemannian Geometry},
\newblock Birkhäuser, Boston, 1992.

\bibitem{Hamilton}
R.~S.~Hamilton,
\newblock The Ricci flow on surfaces,
\newblock \emph{Mathematics and General Relativity},
\newblock Contemporary Mathematics \textbf{71} (1988), 237--262.

\bibitem{Lee}
J.~M.~Lee,
\newblock \emph{Riemannian Manifolds: An Introduction to Curvature},
\newblock Graduate Texts in Mathematics, Vol.~176,
\newblock Springer, New York, 1997.

\bibitem{ONeill}
B.~O'Neill,
\newblock \emph{Semi-Riemannian Geometry with Applications to Relativity},
\newblock Academic Press, New York, 1983.

\bibitem{Petersen}
P.~Petersen,
\newblock \emph{Riemannian Geometry},
\newblock 3rd ed.,
\newblock Springer, New York, 2016.

\end{thebibliography}
\end{document}